\newtheorem{thm}{Theorem} 
\theoremstyle{definition}
\newtheorem{defn}{Definition}
\newcommand{\nc}[2]{\newcommand{#1}{#2}}
\newcommand{\rnc}[2]{\renewcommand{#1}{#2}}
\nc{\wegengruen}{\end{equation}}
\newcommand{\Z}{\mathbb{Z}}
\newcommand{\N}{\mathbb{N}}
\newcommand{\C}{\mathbb{C}}
\newcommand{\hsp}{{\hspace{-1pt}}}
\newcommand{\hs}{{\hspace{1pt}}}
\newcommand{\hH}{\mathcal{H}}    
\newcommand{\K}{{\mathcal{K}}}
\newcommand{\G}{{\mathcal{G}}}
\newcommand{\cO}{\mathcal{O}}
\newcommand{\cN}{\mathcal{N}}
\newcommand{\CCq}{\cO(\C_q^2)}  
\newcommand{\dd}{\mathrm{d}}
\newcommand{\im}{\mathrm{i}}
\newcommand{\E}{\mathrm{e}}
\newcommand{\id}{{\mathrm{id}}}
\newcommand{\Ker}{\mathrm{ker}}
\newcommand{\ran}{\mathrm{ran}}
\newcommand{\spec}{\mathrm{spec}}
\newcommand{\dom}{\mathrm{dom}}
\newcommand{\supp}{\mathrm{supp}}
\newcommand{\rmB}{\mathrm{B}}
\newcommand{\alg}{\text{*-}\mathrm{alg}}
\newcommand{\LL}{\mathcal{L}_2}
\newcommand{\lN}{\ell_2(\N)}
\newcommand{\ra}{\rightarrow}
\newcommand{\ot}{\otimes}
\title{Function algebras on a 2-dimensional quantum complex plane}
\author{{\sc Ismael Cohen} \\
\normalsize
Instituto de F\'isica y Matem\'aticas\\
\normalsize
Universidad Michoacana de San Nicol\'as de Hidalgo, Morelia, M\'exico\\[2pt]
\normalsize
and\\[2pt]
\normalsize
Centro de Ciencias Matem\'aticas, Campus Morelia\\
\normalsize
Universidad Nacional Aut\'onoma de M\'exico (UNAM), Morelia, M\'exico\\
\normalsize
e-mail: {\it ismaelcohen10@gmail.com}\\[16pt] 
{\sc Elmar Wagner\footnote{
corresponding author \ \  {\it MSC2010:} 46L85, 46L52   \ \  
{\it Key Words:} q-normal op\-era\-tors, quantum complex plane, well-behaved representations, 
noncommutative function spaces}
} \\
\normalsize
Instituto de F\'isica y Matem\'aticas\\
\normalsize
Universidad Michoacana de San Nicol\'as de Hidalgo, Morelia, M\'exico\\
\normalsize
e-mail: {\it elmar@ifm.umich.mx}}
\date{}                                       
\begin{document}
\maketitle

\begin{abstract}
The well-behaved representations of the coordinate algebra 
of a 2-dimensional quantum complex plane are classified 
and a C*-algebra is defined which can be viewed as the algebra of continuous functions 
on the 2-dimensional quantum complex plane
vanishing at infinity. 
\end{abstract}

\section{Introduction}
The general purpose of this paper is to study non-compact quantum spaces in the C*-algebraic framework. 
Usually quantum spaces arising in Quantum Group Theory are given by generators and relations.  
The (*-)algebra obtained in this way can then be viewed as the coordinate ring of polynomial functions on the quantum space. 
For compact quantum spaces, there is a general procedure to assign a unital C*-algebra to the quantum space: 
one considers the universal C*-norm defined as the supremum of the operator norms of all bounded *-representation 
of the coordinate ring and 
takes the closure with respect to this norm. Here, having a compact quantum space is essentially 
synonymous to the existence of the universal C*-norm. 

The non-compact situation is characterized by the fact that the *-algebra admits unbounded *-representations and that 
the universal C*-norm might not exist. For this setting, S.~L.~Woro\-nowicz developed a theory 
of C*-algebras generated by unbounded elements \cite{Wo91,Wo}. However, this method is not constructive, the unbounded operators 
and the C*-algebra have to be given at the beginning, one only proves that the unbounded operators actually generate 
the C*-algebra.  
Since we are more interested in having an explicit C*-algebra at hand than proving technical details, we prefer to construct 
a non-commutative C*-algebra 
by analogy to the classical C*-al\-ge\-bra of continuous functions vanishing at infinity 
on the corresponding locally compact 
space. The analogy to the classical case  involves concrete Hilbert space representations 
of the coordinate ring 
and can therefore be done only  by a case to case study. 
In the present paper, we will do it for a 2-dimensional quantum complex plane, 
the 1-dimensional version has already been treated in \cite{CW,So}. 
Similar construction of function algebras on non-compact quantum spaces can be found, for instance, 
in \cite{BS, KW, OW, SSV, SSV00} 
but none of these papers touches the  C*-algebra framework. 

Let us briefly outline our construction. First we classify all well-behaved Hilbert space representations 
of the coordinate ring $\CCq$. It is important to have knowledge about all possible 
representations because it turns out that different representations correspond to different domains 
of the quantum complex plane.  The next step is to realize these representations on a function space 
($\LL$-space) such that modulus of each generator 
(the non-negative self adjoint part in its polar decomposition)  
acts as a multiplication operator. Furthermore, the measures are chosen in such a way that 
the partial isometries from the polar decompositions are given on the same footing: they act 
as multiplicative $q$-shifts on functions. In this manner we obtain very simple commutation relations between 
the multiplication operators and the partial isometries. 
Then we consider an auxiliary *-algebra of bounded operators generated by continuous functions of  
the moduli of the generators (represented by multiplication operators) 
and powers of the partial isometries and their adjoints. 
For the interpretation as continuous functions on the 2-dimensional quantum complex plane 
vanishing at infinity, we require that the continuous functions belong to $C_0([0,\infty)\hsp\times\hsp [0,\infty))$ 
and that these functions, when evaluated at 0, do not depend on the phases 
(the partial isometries from the polar decompositions).  
Moreover, in order not to ``miss any points'', we consider some sort of universal representation, 
where the involved measures have the largest possible support.  
Finally, the C*-algebra 
of continuous functions vanishing at infinity is defined by 
taking the C*-closure of the auxiliary algebra in the operator norm. 

An advantage of our approach is that it allows a geometric interpretation of the different representations. 
As usual, a nontrivial 1-dimensional representation corresponds to a classical point, 
in our case to the origin of $\C^2_q$. Setting one generator to zero, we get a copy of $\C_q$ inserted into  
the quantum space $\C^2_q$.  Last but not least, there is a family of faithful representations 
that describe a 2-dimensional quantum complex plane, where the copy $\C_q$ from the previous representation 
is shrunk to a point. Therefore, restricting oneself (as  quite customary) 
to the family of faithful representations 
will not yield the whole 2-dimensional quantum complex plane.

\section{Preliminaries} 

Throughout this paper, $q$ stands for a real number in the interval $(0,1)$. 
The coordinate ring $\CCq$ of polynomial functions on 2-dimensional quantum complex plane  
is  the *-algebra over $\C$ 
generated by $z_1$ and $z_2$ satisfying the (overcomplete) relations \cite{KS} 
\begin{align}
z_2 z_1 &= q z_1 z_2\,, &
z_1^* z_2^* &= q z_2^* z_1^* \,,\label{R1}\\
 z_2 z_1^* &= q z_1^* z_2 \,,  &
z_1 z_2^* &= q z_2^* z_1\,,       \label{R2}\\
z_2 z_2^* &= q^2 z_2^* z_2\,, &
z_1 z_1^* &= q^2 z_1^* z_1 - (1-q^2) z_2^* z_2\,. \label{R3}
\end{align}
By a slight abuse of notation, we will use in the following sections the same letter  to denote 
a generator of the  coordinate ring $\CCq$ and its representation as a Hilbert space operator.

We adopt the convention that 
$\N= \{ 1,2, \ldots \}$ and $\N_0= \{0,1,2, \ldots\}$.  
Given an at most countable index set $I$ and a Hilbert space  $\hH_0$, consider the orthogonal sum 
$\hH=\mathop{\oplus}_{i\in I}\hH_0$. We write $\eta_i$ for the vector in $\hH$ which has the 
element $\eta\in\hH_0$ as its $i$-th component and zero otherwise. \label{Hn}
It is understood that $\eta_i=0$ whenever $i\notin I$. 

For a subset $A\subset [0,\infty)$,  the indicator function $\chi_{A}: [0,\infty)\ra \C$  
is defined by 
\[\label{chi}
\chi_A\hsp(t) := \left\{  
\begin{array}{l l}
    1,  & \quad t\in A\,,\\
    0, & \quad t\notin A\,.
\end{array}\right.
\]

For a subset $S$ of a *-algebra, the symbol  $\alg(S)$ 
stands for the *-subalgebra generated by the elements of $S$.

\section{Hilbert space representations of the 2-dimensional quantum complex plane}

In this section, we give a complete description of ``good'' *-representations of $\CCq$. 
Here ``good'' means that, 
in order to avoid pathological cases, we impose in Definition \ref{wb} 
some natural regularity conditions on the unbounded operators. 
These representations will be called \emph{well-behaved}, see \cite{S}. 
To motivate the regularity conditions, we start with formal algebraic manipulations.  
These algebraic relations, together with the regularity conditions of Definition~\ref{wb}, 
will allow us to classify  in  Theorem \ref{reps} 
all well-behaved representations of $\CCq$. 

Let $z_1$ and $z_2$ be  densely defined closed operators on a Hilbert space $\hH$ satisfying 
the relations \eqref{R1}--\eqref{R3} on a common dense domain. 
Set  $Q:= z_2^*z_2$. From the relations \eqref{R1}--\eqref{R3} within the algebra $\CCq$, we get 
\[  \label{zQ}
z_1 Q= Q z_1, \quad z_1^* Q= Q z_1^*, \quad z_2Q=q^2 Q z_2, \quad z_2^*Q=q^{-2} Q z_2^*\,, 
\]
and for any polynomial $p$ in one variable, \eqref{zQ} yields 
\[ \label{pQ}
z_1 p(Q)= p(Q) z_1, \quad z_1^* p(Q)= p(Q) z_1^*, \quad z_2 p(Q)=p(q^2 Q) z_2, \quad z_2^*p(Q)=p(q^{-2} Q) z_2^*. 
\]
Let us assume that \eqref{pQ} holds for all 
bounded Borel measurable functions on $\spec(Q)$, 
where $p(Q)=\int p(\lambda) \,\dd E(\lambda)$ is defined by the spectral theorem with the unique 
projection-valued measure $E$ of $Q$.     
Then $\ker(Q)=E(\{0\})\hH$ and $\ker(Q)^\perp=E((0,\infty))\hH$ are invariant under the actions of the generators 
of $\CCq$. 
On $\ker(Q)$, we have $Q= z_2^*z_2=0$, thus $z_2=z_2^*=0$, and \eqref{R3} becomes 
\[ \label{R30}
z_1 z_1^* = q^2 z_1^* z_1.  \tag{\ref{R3}'}
\]
On $\ker(Q)^\perp$, the operator $\sqrt{Q}^{\hs -1}= \int \frac{1}{\sqrt{\lambda}} \,\dd E(\lambda)$ 
is well-defined. Consider at the moment the abstract element 
\[
 w:= \sqrt{Q}^{-1} z_1= z_1 \sqrt{Q}^{-1} .   \label{w}
\]
Inserting \eqref{w} into 
the second relation of \eqref{R3} yields formally 
\[ \label{wwQ}
ww^* -q^2 w^*w = -(1-q^2)  z_2^* z_2 Q^{-1} = -(1-q^2). 
\]

Note that, by \eqref{zQ}, we have $z_1^*z_1\, z_2^*z_2 = z_2^*z_2\, z_1^*z_1$. This relation together with 
\eqref{wwQ}, \eqref{pQ}, \eqref{R30}, and 
the first equation  in \eqref{R3} motivate the following definition of 
well-behaved *-representations of $\CCq$. 

\begin{defn} \label{wb}
A well-behaved *-representations of $\CCq$ is given by densely defined  closed operators $z_1$ and $z_2$ 
satisfying \eqref{R1}--\eqref{R3} on a common dense domain such that 
\begin{enumerate}[(i)]
\item \label{0}
The self-adjoint operators $z_1^*z_1$ and $z_2^*z_2$ strongly commute. 
\item  \label{i}
$z_2$ is a $q$-normal operator, i.e., it satisfies the operator equation 
$$
z_2 z_2^* = q^2 z_2^* z_2.
$$
\item  \label{ii}
For all 
bounded Borel measurable functions $f$  on $\spec(Q)$, the operator relations 
$$
f(Q) z_1\subset z_1 f(Q),  \quad f(Q) z_1^*\subset z_1^* f(Q), \quad 
f(Q) z_2\subset z_2 f(q^2Q),  \quad f(Q) z_2^*\subset z_2^* f(q^2Q) 
$$
hold. 
\item   \label{iii}
On $\ker(Q)$, $z_1$ is a $q$-normal operator, i.e., 
$$
z_1 z_1^* = q^2 z_1^* z_1.
$$
\item On $\ker(Q)^\perp$, 
$z_1$ commutes with $\sqrt{Q}^{-1}$ and setting 
$w:= \sqrt{Q}^{-1} z_1= z_1 \sqrt{Q}^{-1} $ defines a densely defined closed operator 
fulfilling the operator equation  
\[ \label{ww}
ww^*  = q^2 w^*w -(1-q^2)  . 
\]
\end{enumerate}
Here, the equality of operators on both sides of the equations includes the equality of their domains. 
\end{defn}

The well-behaved representations of 
$q$-normal operators have been studied in \cite{CSS} and \cite{CW}. 
By \cite[Corollary 2.2]{CW}, 
any $q$-normal operator $\zeta$ on a Hilbert space $\G$ 
admits the following representation: 
\[ \label{zeta}
\G= \Ker(\zeta) \oplus ( \oplus_{n\in\Z} \,\G_0), \qquad  
\zeta =0 \ \ \text{on} \ \ \Ker(\zeta), \qquad 
\zeta\, g_n = q^n Z g_{n-1}\ \ \text{on} \ \ \oplus_{n\in\Z} \G_0, 
\]
where $Z$ denotes a self-adjoint operator on $\G_0$ sucht that 
$\spec(Z)\subset [q,1]$ and $q$ is not an eigenvalue of $Z$.

Furthermore, the representations of operators $w$ satisfying \eqref{ww} 
on a Hilbert space $\G$  have been classified in \cite[Lemma 2.3]{KW}. 
It follows from this lemma  that  $\G$  can be written as a direct sum 
$\G= \oplus_{m\in\N}\, \G_0$,  and the actions of $w$ and $w^*$ are determined by 
\[ \label{wg}
w\, g_m = \sqrt{q^{-2m}-1}\, g_{m+1} ,  \quad  w^* g_m = \sqrt{q^{-2(m-1)}-1}\, g_{m-1} ,       \quad g\in\G_0,\ \  m\in\N. 
\] 

Equations \eqref{zeta} and \eqref{wg} are all we need for the classification of the well-behaved repre\-sen\-ta\-tions of $\CCq$.

\begin{thm} \label{reps}
Any well-behaved Hilbert space representation of $\CCq$ is unitarily equivalent to a representation 
given by the following formulas: 
Let $\hH_{0}$, $\hH_{00}$ and $\cN$ be Hilbert spaces, and let 
$A$ and $B$ be self-adjoint operators on $\hH_{0}$ and $\hH_{00}$, respectively, 
such that their spectrum belongs to $[q,1]$ and $q$ is not an eigenvalue. 
Then the Hilbert space $\hH$ of the representation decomposes into the direct sum 
$$
\hH= \cN\oplus (\oplus_{k\in\Z} \hH_0)\oplus (\oplus_{n\in\Z} \oplus_{m\in \N}  \hH_{00}), 
$$
and the actions of $z_1$ and $z_2$ are determined by 
\begin{align}
&z_1=z_2=0 \ \ \text{on} \ \ \cN,  \label{N}  \\
&z_1\,h_k = q^k A \hs h_{k-1}, \ \ z_2 =0 \ \ \text{on} \ \  \oplus_{k\in\Z} \hH_0, \label{K}\\
&z_1\,h_{n,m} = \sqrt{q^{-2m}-1}\hs q^n B\hs  h_{n,m+1}, \ \ z_2\, h_{n,m} = q^n B\hs  h_{n-1,m}   \label{H} 
\ \ \text{on} \ \ \oplus_{n\in\Z} \oplus_{m\in \N} \hH_{00}. 
\end{align}
A common dense domain is obtained by considering the subspace of those elements of $\hH$ which 
have at most a finite number 
of non-zero components in the direct sum.  
Only the representation \eqref{H} is faithful. 
A representation is irreducible if and only if one of the Hilbert spaces $\hH_{0}$, $\hH_{00}$ and $\cN$  
is isomorphic to $\C$ and the others are zero. 
\end{thm}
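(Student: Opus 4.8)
The plan is to reduce everything to the structure theorems for $q$-normal operators (equation \eqref{zeta}) and for the operator $w$ of \eqref{ww} (equation \eqref{wg}), which are quoted in the preliminaries. First I would start with a well-behaved representation as in Definition~\ref{wb} and use condition \eqref{ii}: since $f(Q)$ commutes strongly with the spectral projection $E(\{0\})$ of $Q=z_2^*z_2$, the subspaces $\Ker(Q)$ and $\Ker(Q)^\perp$ reduce all four generators $z_1,z_1^*,z_2,z_2^*$. This splits $\hH$ orthogonally into $\hH=\Ker(Q)\oplus\Ker(Q)^\perp$, and I would treat the two pieces separately.

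On $\Ker(Q)$ we have $z_2=z_2^*=0$ and, by condition \eqref{iii}, $z_1$ is a $q$-normal operator. Applying \eqref{zeta} to $\zeta=z_1$ gives $\Ker(Q)=\cN\oplus(\oplus_{k\in\Z}\hH_0)$ where $\cN:=\Ker(z_1)\cap\Ker(Q)$, $A:=Z$ is self-adjoint on $\hH_0:=\G_0$ with $\spec(A)\subset[q,1]$ and $q$ not an eigenvalue, and $z_1 h_k=q^k A h_{k-1}$; this is exactly \eqref{N} together with the $z_1$-part of \eqref{K}, and $z_2=0$ there. The only subtlety is matching the conventions of \eqref{zeta} (writing $\zeta g_n=q^n Z g_{n-1}$), which is purely notational.

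On $\Ker(Q)^\perp$, condition (v) says $w:=\sqrt{Q}^{-1}z_1$ is a well-defined closed operator satisfying \eqref{ww}, so by \cite[Lemma 2.3]{KW} and \eqref{wg} we may write $\Ker(Q)^\perp=\oplus_{m\in\N}\G_0'$ with $w g_m=\sqrt{q^{-2m}-1}\,g_{m+1}$ and $w^* g_m=\sqrt{q^{-2(m-1)}-1}\,g_{m-1}$ (noting the first term vanishes at $m=1$, consistent with $\N$ starting at $1$). Then $z_1=\sqrt{Q}\,w$, so I still need to analyze $Q=z_2^*z_2>0$ on this space together with $z_2$. Here I would invoke condition \eqref{i}: $z_2$ is $q$-normal, and on $\Ker(Q)^\perp$ it has trivial kernel (since $\Ker(z_2)=\Ker(z_2^*z_2)=\Ker(Q)$), so \eqref{zeta} gives a further decomposition $\G_0'=\oplus_{n\in\Z}\hH_{00}$ with $z_2$ acting as $q^n B$ shifting the $n$-index, where $B$ is self-adjoint on $\hH_{00}$ with $\spec(B)\subset[q,1]$, $q$ not an eigenvalue. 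Combining the $m$- and $n$-decompositions gives $\Ker(Q)^\perp=\oplus_{n\in\Z}\oplus_{m\in\N}\hH_{00}$. On this space $Q=z_2^*z_2$ acts by $q^{2n}B^2$ on the $(n,m)$-component (independent of $m$, since $Q$ commutes with everything in sight), hence $\sqrt{Q}$ acts as $q^n B$, and $z_1=\sqrt{Q}\,w$ acts by $q^n B\cdot\sqrt{q^{-2m}-1}$, sending the $(n,m)$-component to the $(n,m+1)$-component — precisely \eqref{H}. The common dense domain and the fact that $z_2 p(Q)=p(q^2Q)z_2$ etc.\ are compatible with these formulas is a direct check.

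The main obstacle is making the two structure theorems \emph{interact} correctly on $\Ker(Q)^\perp$: one must verify that the $w$-decomposition from \eqref{wg} (indexed by $m\in\N$) and the $z_2$-decomposition from \eqref{zeta} (indexed by $n\in\Z$) can be chosen simultaneously, i.e.\ that $z_2$ shifts only the $n$-index while preserving the $m$-grading, and that $A$, $B$, $Z$ are genuinely independent operators on possibly different Hilbert spaces rather than being forced to coincide. This comes down to checking that $w$ (equivalently $z_1$, equivalently $\sqrt{Q}^{-1}z_1$) commutes appropriately with $z_2$ and $z_2^*$ in the sense of \eqref{ii}/\eqref{0}: from $z_2 f(Q)=f(q^2Q)z_2$ and $z_2$ commuting with $z_1$ up to the relations \eqref{R1}, one deduces $z_2 w=w z_2$ in a suitable sense, so $z_2$ acts fiber-wise in the $m$-index, allowing $\hH_{00}$ to absorb $\G_0'$ uniformly in $n$. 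Once this compatibility is established, reading off the formulas \eqref{N}--\eqref{H} is mechanical, and the irreducibility and faithfulness statements follow: a representation of the form \eqref{H} is faithful because $z_2$ has infinitely many nonzero ``$q$-scaled'' blocks so no nonzero polynomial in the generators can vanish, whereas \eqref{N} kills both generators and \eqref{K} kills $z_2$; and irreducibility forces $A$ (resp.\ $B$) to have no nontrivial reducing subspace, hence $\hH_0\cong\C$ (resp.\ $\hH_{00}\cong\C$, $\cN\cong\C$), with the three summand-types being mutually orthogonal reducing pieces so exactly one may be nonzero.
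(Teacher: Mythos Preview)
Your proof is correct and uses the same ingredients as the paper: the splitting $\hH=\ker(Q)\oplus\ker(Q)^\perp$, the $q$-normal structure theorem \eqref{zeta}, and the classification \eqref{wg} for operators satisfying \eqref{ww}. The only difference is the order in which the two structure results are applied on $\ker(Q)^\perp$: the paper first decomposes via $z_2$ (obtaining $\G_n=E((q^{2n+2},q^{2n}])\G$ from \eqref{zeta}) and then applies \eqref{wg} on each $\G_n$, whereas you first apply \eqref{wg} globally and then decompose each eigenspace of $w^*w$ via $z_2$; both orderings require exactly the compatibility check you flag as the main obstacle, and the paper carries it out through condition~\eqref{ii} and the spectral projections of $Q$ rather than through the commutation $z_2w=wz_2$.
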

\begin{proof}
As the sets $\{0\}$ and $(0,\infty)$ are invariant under multiplication with powers of $q$, 
it follows from Definition \ref{wb}\eqref{ii} that $\K := E(\{0\})\hH$ and 
$\G := E((0,\infty))\hH$ are invariant under the actions of $z_1$ and $z_2$. 
Clearly, $\hH= \K\oplus \G$. 
Since $\K=\ker(Q) =\ker(z_2^*z_2)$, we have $z_2=0$ on $\K$. 
By Definition \ref{wb}\eqref{iii}, the restriction of $z_1$ to $\K$  is a $q$-normal operator, 
therefore its representation is given by \eqref{zeta}. 
Setting $\cN:=\ker(z_1)$,  $\hH_0:= \G_0$ and $A:=Z$, we obtain \eqref{N} y \eqref{K} from \eqref{zeta}. 

By Definition \ref{wb}\eqref{i} and the definition of $\G$, $z_2$ is a $q$-normal operator on 
$\G$ with $\Ker(z_2) = \{ 0\}$. Therefore $z_2$ acts on $\G=\oplus_{n\in\Z} \hs \G_0$ 
by the formulas  on right hand side of \eqref{zeta}. 
Note that 
\[  \label{Qg}
Q \hs g_n =q^{2n} Z^2 g_n \quad  \text{on} \quad  \G_n:=\{ g_n : g\in \G_0\}, 
\]
with $\spec(q^{2n} Z^2)\subset [q^{2n+2}, q^{2n}]$ and $q^{2n+2}$ 
is not an eigenvalue.  
Considering the disjoint union $(0,\infty)=\cup_{n\in\Z} \,(q^{2n+2}, q^{2n}]$, 
one readily sees that $\G_n=E((q^{2n+2}, q^{2n}])\hs \G$.  
From Definition \ref{wb}\eqref{ii}, it follows that 
$$
E((q^{2n+2}, q^{2n}]) z_1\subset z_1 E((q^{2n+2}, q^{2n}]), \ \; 
E((0,\infty)\hsp \setminus\hsp  (q^{2n+2}, q^{2n}]) z_1\subset 
z_1E((0,\infty)\hsp\setminus \hsp (q^{2n+2}, q^{2n}]),  
$$
and the same holds for $z_1$ replaced by $z_1^*$. 
Since  $ \sqrt{Q}^{-1} $ trivially commutes with $E((q^{2(n+1)}, q^{2n}])$, 
we conclude that $w:=  \sqrt{Q}^{-1} z_1$  and $w^*$ leave $\G_n$ invariant. 
On $\G_n$, $w$ still satisfies \eqref{ww}, thus its representation is given by \eqref{wg}. 
Therefore we can write $\G_n = \oplus_{m\in\N}\hs  \hH_{n0}$ and 
\[ \label{whn}
w\hs h_{n,m} =  \sqrt{q^{-2m}-1}\hs h_{n,m+1}, 
\]  
where $h_{n,m}$ belongs to the $m$-th position in the direct sum $\oplus_{m\in\N} \hH_{n0}$. 
But $\G_n$ is just a copy of $\G_0$, 
so $\hH_{n0} = \hH_{00}$ for all $n\in\Z$. Equation \eqref{whn} yields 
\[
w^*w\hs h_{n,m}  = (q^{-2m}-1)\hs  h_{n,m} , 
\]
hence  $\hH_{nm}  := \{ h_{n,m} : h\in \hH_{00}\}$ 
is the eigenspace  for the eigenvalue $q^{-2m}-1$ 
of the restriction of $w^*w$ to $\G_n$.  
Definition \ref{wb}\eqref{0} implies that $w^*w$ and $Q$ strongly commute. 
Therefore the restrictions of  $w^*w$ and $Q$ to $\G_n=E((q^{2n+2}, q^{2n}])\G$ also 
strongly commute. As a consequence, the self-ad\-joint operator $Z$ from \eqref{Qg} leaves 
the eigenspaces $\hH_{nm}$ invariant. Denote the restriction of $Z$ to $\hH_{00}$ by $B$. 
Since $\hH_{nm}$ is an identical copy of $\hH_{00}$ in the $m$-th position of the direct sum $\oplus_{m\in\N} \hH_{n0}$, we get 
\[ \label{ZA} 
 Z \hs h_{n,m} =  B\hs  h_{n,m}  \quad \text{for all} \ \ h_{n,m}  \in\hH_{nm}. 
\]
Moreover, $B$ inherits the spectral properties  from $Z$ as required in Theorem \ref{reps}. 
Finally, \eqref{zeta} and \eqref{ZA} give 
\[   \label{z2}
z_2 \hs h_{n,m}  = q^n Z \hs h_{n-1,m}  = q^n B \hs h_{n-1,m},   \quad \text{} \ \ h_{n,m}  \in\hH_{nm}, 
\]
and from \eqref{Qg} and \eqref{whn}, we get 
$$
z_1 \hs h_{n,m} =  \sqrt{Q}\hs w \hs h_{n,m}  
=\sqrt{q^{-2m}-1} \sqrt{q^{2n} Z^2}\hs  h_{n,m+1}  = \sqrt{q^{-2m}-1}\hs q^n B h_{n,m+1} 
$$
for all $h_{n,m}  \in\hH_{nm}$. This proves \eqref{H}. 

That the representation \eqref{H} is faithful follows from the fact that the representations of 
$z_2$ and  $\omega$ are faithful, see \cite{CSS} and  \cite{KW}, respectively. 
The statement about irreducible representations is obvious since writing any of 
the Hilbert spaces $\hH_{0}$, $\hH_{00}$ and $\cN$ as an orthogonal sum of two non-zero 
subspaces will result in an orthogonal sum of non-trivial representations. 
\end{proof}

\section{Hilbert space representations on function spaces}

Note that the decomposition of $\hH$ in Theorem \ref{reps} is determined by the spectral properties of the self-adjoint 
operators $Q$ and $w^*w$. 
As well-known \cite[Theorem VII.3]{RS},  each self-adjoint operator $T$ on a separable Hilbert space is 
unitarily equivalent to a direct sum of multiplication operators on $\LL(\spec(T), \mu)$.   
We will use this fact to realize the representations of  Theorem \ref{reps} 
on $\LL$-spaces which will be the basis for studying function algebras on 
the 2-dimensional quantum complex plane in the next section. 

The direct sum of Hilbert spaces $\oplus_{n\in\Z} \oplus_{m\in \N}  \hH_{00}$ in Theorem \ref{reps}  
is isomorphic to the tensor product $ \lN \otimes (\oplus_{n\in\Z}  \hH_{00}) $. 
Let $\zeta$ be a $q$-normal operator acting on $\oplus_{n\in\Z}  \hH_{00}$ by the formulas on the right hand side of \eqref{zeta},  
and let $\omega$ act on $\lN$ by the formulas in \eqref{wg} with $\G_0=\C$. 
Then $z_2$ from \eqref{z2} and $w$ from \eqref{whn} can be written $z_2= \id \otimes \zeta$ and $w=\omega\otimes\id$, 
respectively. 
It has been shown in  \cite[Theorem 1]{CSS} that any $q$-normal operator $\zeta$ is unitarily 
equivalent to a direct sum of operators of the following form: 
There exists a $q$-invariant Borel measure
$\mu$ on $[0,\infty)$
such that  $\zeta$ and $\zeta^*$ act 
on $\hH= \LL([0,\infty), \mu)$ by 
\[ \label{L2rep}
\zeta \hs f(t)=q\hs t f(qt),\ \ \zeta^*f(t)=tf(q^{-1}t),\ \   f\hsp \in\hsp\dom(\zeta)\hsp:=\hsp 
\{ h\hsp \in\hsp \LL([0,\infty), \mu): \mbox{$\int$}\hs t^2\hs |h(t)|^2 \dd\mu(t)\hsp<\hsp \infty\}. 
\]
Here, the $q$-invariance of the measure means that 
$\mu(qS) = \mu(S)$ for all Borel subsets $S$ of $[0,\infty)$. 
Note that $\ker(\zeta)=\{0\}$  if and only if $\mu(\{0\}) =0$. 
Therefore, in order to obtain a representation of the form \eqref{H}, 
we have to assume that $\mu(\{0\}) =0$. 

To turn $\lN$ into an $\LL$-space, we consider the operator 
$y:=\sqrt{\omega^*\omega +1}$ on $\lN$. Denoting by $\{ e_n : n\in\N\}$ the standard basis of $\lN$, we have 
\[ \label{y}
 y\hs e_n = q^{-n}\hs e_n,\quad n\in\N. 
\]
Since the set of eigenvalues of $y$  is discrete, $y$ can be realized as a multiplication operator 
on $\LL(\spec(y),\sigma) \cong \lN$ 
by choosing the counting measure $\sigma(\{q^{-n}\}) = 1$ 
on $\spec(y)$. Extending $\sigma$ to a Borel measure on $[0,\infty)$ by setting $\sigma([0,\infty)\setminus \spec(y)) :=0$, 
we get 
$y\hs g(s)  = s\hs g(s)$. 
The set 
$$
\{ e_n:=\chi_{\{q^{-n}\}}(s): n\in\N\}
$$
is an orthonormal basis of $\LL(\spec(y),\sigma)$, 
where $\chi_{\{q^{-n}\}}$ denotes the indicator function \eqref{chi}.  
Note that 
$$
\chi_{\{q^{-n}\}}\hsp(qs)= \chi_{\{q^{-(n+1)}\}}\hsp(s)=e_{n+1}\ \ \text{and}  \ \ 
\sqrt{(q\hs s)^{2} -1}\hs \chi_{\{q^{-(n+1)}\}}\hsp(s) = \sqrt{ q^{-2n} -1}\hs \chi_{\{q^{-(n+1)}\}}\hsp(s), 
$$ 
where we used $f(t)\hs  \chi_{\{t_0\}}\hsp(t) =f(t_0)\hs  \chi_{\{t_0\}}\hsp(t)$ in the second equation. Hence 
\[   \label{wy} 
\omega \hs g(s) = \sqrt{(q\hs s)^{2} -1}\, g(q\hs s) \quad \text{and} \quad 
\omega^* g(s) = \sqrt{s^{2} -1}\, g(q^{-1}\hs s)
\]
for $g\in\dom(\omega)=\dom(\omega^*)
:=\{ h\in \LL([0,\infty),\sigma) \hs:\hs  \mbox{$\int$}\hs s^{2}|h(s)|^2\hs  \dd\sigma(s) < \infty\})$. 
En particular, 
\[
\omega^*e_1= \sqrt{s^{2} -1}\, \chi_{\{q^{-1}\}}\hsp(q^{-1}\hs s) = \sqrt{1^2 -1}\, \chi_{\{q^{-1}\}}\hsp(q^{-1}\hs s) =0, 
\]
as required. Also, although $||\chi_{\{1\}}\hsp(s)||=0$ and $\chi_{\{1\}}\hsp(qs)= \chi_{\{q^{-1}\}}\hsp(s) =e_1$, 
we have 
$$
\sqrt{(q\hs s)^{2} -1}\, \chi_{\{1\}}\hsp(qs) =\sqrt{(q\hs q^{-1})^{2} -1}\, \chi_{\{1\}}\hsp(qs) =0, 
$$
so that \eqref{wy} remains consistent.

Now, 
under the isomorphism 
$\LL( [0,\infty),\sigma)\ot \LL([0,\infty),\mu) \cong 
\LL( [0,\infty)\! \times\! [0,\infty), \sigma\otimes\mu )$, 
we obtain from \eqref{L2rep} and \eqref{wy} the following representation of 
 $z_1 = \sqrt{Q}\hs w =   \omega\ot \sqrt{\zeta^*\zeta}$  and $z_2 = \id\otimes \zeta$, 
\[ \label{zL2}
z_1\hs h(s,t) =  \sqrt{(qs)^{2}-1}\hs t\hs h(q s,t),    \qquad z_2\hs g(s,t)=q \hs t\hs g(s, q\hs t), 
\] 
where $h\in \dom(\omega) \ot_{\mathrm{alg}} \dom(\zeta) $ and 
$g\in \LL( [0,\infty),\sigma)\ot_{\mathrm{alg}}\dom(\zeta)$. 
To sum up, we have shown that the representations from \eqref{H} 
are unitarily equivalent to a direct sum of representations 
of the type described in  \eqref{zL2}. 

Recall that $\cN\oplus (\oplus_{k\in\Z} \K_0)$ in Theorem \ref{reps} 
corresponds to the kernel of the $q$-normal operator $z_2$, 
and that a  $q$-normal operator in the representation \eqref{L2rep} 
has a trivial kernel if and only if $\mu(\{0\})=0$.  
Since $\mu$ from the last paragraph was assumed to satisfy $\mu(\{0\})=0$, 
we will now add a point measure $\delta_0$ centred at $0$ to it. 
By unitary equivalence, we may assume that $\delta_0(\{0\})=1$. 
Then the representation of $z_1$ on $\oplus_{k\in\Z} \K_0$ is again 
unitarily equivalent to a direct sum of  representations of the type \eqref{L2rep}. 
To realize these representations on our $\LL$-space, we choose a 
$q$-invariant measure on $[0,\infty)$, say $\nu$, assume again $\nu(\{0\})=0$, 
take the product measure $\nu \ot \delta_0$, 
and add it to $\sigma\otimes \mu$. 
Then 
$$
\LL( [0,\infty)\! \times\! [0,\infty), \sigma\hsp \ot\hsp\mu \hsp+\hsp \nu  \hsp \ot\hsp \delta_0 ) 
\cong \LL( [0,\infty)\! \times\! [0,\infty), \sigma \hsp \ot\hsp\mu )\oplus \LL( [0,\infty)\! \times\! [0,\infty), \nu\hsp \ot\hsp\delta_0 ), 
$$ 
and on $\LL( [0,\infty)\! \times\! [0,\infty),\nu\hsp \ot\hsp \delta_0 )$, we have the representation 
\[ \label{zL20}
z_1\hs h(s,t) =q\hs  s\hs  h(q s, t) =q\hs  \chi_{\{0\}}\hsp(t)\hs s\hs  h(q\hs  s,t),    \quad z_2\hs g(s,t)=q\hs  t\hs g(s,q\hs t) =0 
\] 
for all  $h$    
such that $\int s^2 |h(0,s)|^2 \hs \dd\nu(s)<\infty$ 
and for all $g$. 
Here, for functions depending on the second variable $t$,  we used the fact that $\mathrm{supp}(\delta_0)=\{0\}$. 
Again, by  \cite[Theorem 1]{CSS} and  the same argumentation as above,  
the representations from \eqref{K} are unitarily equivalent to 
a direct sum of representations of the type described in \eqref{zL20}.

Finally, to obtain a non-trivial component  $\cN= \ker{z_1}\cap \ker{z_2}$, we add to 
$\sigma\hsp \ot\hsp\mu  \hsp+\hsp \nu\hsp \ot\hsp\delta_0 $ 
the point measure $\epsilon \hs \delta_0\ot \delta_0$, where $\epsilon=0$ or $\epsilon=1$ 
depending on whether $\cN=\{0\}$ or $\cN\neq \{0\}$. Summarizing, we have proven the following theorem: 

\begin{thm} \label{teo2} 
The Hilbert space representations of $\CCq$ from Theorem \ref{reps} are 
unitarily equivalent to a direct sum of representations of the following type: 
Let $\mu$ and $\nu$ be $q$-invariant Borel measures on $[0,\infty)$ 
such that $\mu(\{0\})=\nu(\{0\})=0$. 
Denote by 
$\delta_0$ the Dirac measure centred at $0$, and 
define a 
Borel measure $\sigma$ on 
$[0,\infty)$ by setting $\sigma(\{q^{-n}\}): = 1$ for all $n\in \N$ and 
$\sigma\big([0,\infty) \setminus \{q^{-n}:n\in\N\}\big):=0$. 
For $\epsilon \in\{0,1\}$, 
consider the Hilbert space 
\[ \label{HL2} 
\hH:= \LL\big( [0,\infty)\! \times\! [0,\infty)\hs , \hs  \sigma\hsp \ot\hsp\mu\hsp+\hsp\nu\hsp \ot\hsp \delta_0 + \epsilon\hs \delta_0\ot \delta_0 \big),  
\]
and set 
$$
\dom(z_1):= \{h \in \hH \,:\,  s\hs t\hs  h \in\hH \ \text{and} \ s\hs  \chi_{\{0\}}(t)\hs  h \in\hH\} , \quad 
\dom(z_2):= \{g \in \hH \,:\, t\hs g\in\hH\} \,, 
$$
where $\chi_{\{0\}}$ denotes the indicator function from \eqref{chi}. 
For $h\in \dom(z_1)$ and $g\in\dom(z_2)$, the ac\-tions of the generators of $\CCq$
are given by 
\begin{align} \label{multrep}
z_1\hs h(s,t) &= \sqrt{(qs)^{2}-1}\hs  t\hs h(q s,t) + q\hs \chi_{\{0\}}\hsp(t)\hs s\hs  h(q s,t),   &
z_2\hs g(s,t)&= q\hs t\hs g(s,q\hs t) , \\
z_1^*\hs h(s,t) &= \sqrt{s^{2}-1}\hs  t\hs h(q^{-1} s,t) +  \chi_{\{0\}}\hsp(t)\hs s\hs  h(q^{-1} s,t),   &
z_2^*\hs g(s,t)&= t\hs g(s,q^{-1} t).  \label{multrep*}
\end{align}
\end{thm}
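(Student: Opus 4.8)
The plan is to assemble Theorem~\ref{teo2} directly from the preparatory work already carried out in this section, using Theorem~\ref{reps} as the starting point and then translating each summand of the decomposition there into a multiplication-operator model. First I would recall that Theorem~\ref{reps} writes an arbitrary well-behaved representation as $\hH = \cN \oplus (\oplus_{k\in\Z}\hH_0) \oplus (\oplus_{n\in\Z}\oplus_{m\in\N}\hH_{00})$ with the actions \eqref{N}--\eqref{H}, so it suffices to realize each of the three pieces on an $\LL$-space of the stated form and then take the direct sum. The key observation, already exploited above, is that $\oplus_{n\in\Z}\oplus_{m\in\N}\hH_{00} \cong \lN \ot (\oplus_{n\in\Z}\hH_{00})$, that $z_2$ factors as $\id\ot\zeta$ for a $q$-normal operator $\zeta$, and that $z_1 = \sqrt{Q}\,w = \omega\ot\sqrt{\zeta^*\zeta}$ with $w = \omega\ot\id$.

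The main steps would then be: (1) invoke \cite[Theorem 1]{CSS} to put $\zeta$ (equivalently $z_2$) into the form \eqref{L2rep} on $\LL([0,\infty),\mu)$ for a $q$-invariant Borel measure $\mu$ with $\mu(\{0\})=0$, the latter condition being exactly what makes $\ker(\zeta)=\{0\}$ as demanded by the summand $\oplus_{n\in\Z}\oplus_{m\in\N}\hH_{00}$; (2) realize $\omega$ on $\LL([0,\infty),\sigma)$ via the operator $y = \sqrt{\omega^*\omega+1}$ with eigenvalues $q^{-n}$, giving the counting-measure model $\sigma$ and the formulas \eqref{wy}; (3) combine these via $\LL([0,\infty),\sigma)\ot\LL([0,\infty),\mu) \cong \LL([0,\infty)\times[0,\infty),\sigma\ot\mu)$ to obtain \eqref{zL2}, which is the $\nu\ot\delta_0$-free, $\epsilon=0$ part of \eqref{multrep}. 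For the middle summand $\oplus_{k\in\Z}\hH_0$ — the kernel of $z_2$, on which $z_1$ is $q$-normal by \eqref{K} — I would again apply \cite[Theorem 1]{CSS} to write $z_1$ in the form \eqref{L2rep} on $\LL([0,\infty),\nu)$ with $\nu$ $q$-invariant, $\nu(\{0\})=0$, and then tensor with the point measure $\delta_0$ in the second (the $z_2$-) variable to get \eqref{zL20}; here one uses $\supp(\delta_0)=\{0\}$ so that $z_2 = qt\,g(s,qt) = 0$ and the $\chi_{\{0\}}$ factor in the $z_1$-formula is automatically $1$. Finally, for $\cN = \ker z_1 \cap \ker z_2$ one simply adds $\epsilon\,\delta_0\ot\delta_0$ with $\epsilon\in\{0,1\}$ recording whether $\cN$ is zero or not. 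Taking the direct sum of all these $\LL$-spaces and noting that
\[
\LL\big([0,\infty)^2, \sigma\ot\mu + \nu\ot\delta_0 + \epsilon\,\delta_0\ot\delta_0\big) \cong \LL([0,\infty)^2,\sigma\ot\mu) \oplus \LL([0,\infty)^2,\nu\ot\delta_0) \oplus \epsilon\,\C
\]
because the three product measures are mutually singular, yields \eqref{HL2}; assembling the three partial formulas gives \eqref{multrep}, and \eqref{multrep*} follows by computing the Hilbert-space adjoints on the natural domains, which are the dense domains of finitely supported elements carried over from Theorem~\ref{reps}.

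The step I expect to be the main obstacle is bookkeeping the domains and checking that the direct-sum decomposition of the $\LL$-space is compatible with the operators as \emph{closed unbounded} operators, not merely on a core: one must verify that $\dom(z_1) = \{h : st\,h\in\hH,\ s\chi_{\{0\}}(t)\,h\in\hH\}$ and $\dom(z_2) = \{g : t\,g\in\hH\}$ are exactly the maximal domains making the multiplication-and-shift formulas closed, and that they intersect correctly across the three mutually singular pieces. A related subtlety, already flagged in the text around \eqref{wy} with the remarks about $\chi_{\{1\}}$ having norm zero, is that the square-root coefficients $\sqrt{(qs)^2-1}$ and $\sqrt{s^2-1}$ vanish at the boundary points $s=q^{-1}$ and $s=1$ of $\spec(y)$, so one must confirm that $z_1$ and $z_1^*$ annihilate the corresponding basis vectors consistently with \eqref{wg}; this is precisely the consistency check performed in the excerpt and can be cited rather than repeated. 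Modulo these verifications, the theorem is a direct transcription of the material developed between Theorem~\ref{reps} and its statement, so the write-up can largely consist of collecting the already-established unitary equivalences and recording the resulting measure and domain formulas.
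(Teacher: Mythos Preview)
Your proposal is correct and mirrors the paper's approach exactly: the paper develops precisely the same three-step translation (the $\sigma\ot\mu$ piece via the tensor factorization $\lN\ot(\oplus_{n\in\Z}\hH_{00})$ and \cite[Theorem~1]{CSS}, the $\nu\ot\delta_0$ piece for $\ker(z_2)$, and the $\epsilon\,\delta_0\ot\delta_0$ piece for $\cN$) in the text preceding the theorem and then simply states ``Summarizing, we have proven the following theorem''---there is no separate proof environment. Your flagged domain/closedness bookkeeping is not carried out in the paper either, so your write-up need not go beyond what you outline.
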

Note that 
\begin{align} \nonumber
\hH&= 
\LL\big( [0,\infty)\! \times\! [0,\infty)\hs , \hs \epsilon\hs \delta_0\ot \delta_0 \big)\oplus 
\LL\big( [0,\infty)\! \times\! [0,\infty)\hs , \hs\nu  \hsp \ot\hsp \delta_0\big) \oplus 
\LL\big( [0,\infty)\! \times\! [0,\infty)\hs , \hs \sigma \hsp \ot\hsp\mu\big)\\
&=
\LL\big( \{0\}\!\times\! \{0\}\hs , \hs \epsilon\hs \delta_0\ot \delta_0 \big)\oplus 
\LL\big( [0,\infty)\! \times\!\{0\} \hs , \hs  \nu \hsp \ot\hsp\delta_0 \big) \oplus 
\LL\big( \{q^{-n}:n\hsp\in\hsp\N\} \! \times\! [0,\infty)\hs , \hs \sigma \hsp \ot\hsp\mu\big)  \label{ortoH}
\end{align} 
and that the restriction of the representation \eqref{multrep} to one of the orthogonal components  
cor\-res\-ponds to one of the representations from  \eqref{N}--\eqref{H}. Of course, we could have formulated 
Theorem \ref{teo2}  for each of the orthogonal subspaces separately. 
The reason why we prefer to work with a single Hilbert space on the domain 
$[0,\infty)\! \times\! [0,\infty)$ 
 will become clear in the next section. 

\section{C*-algebra of continuous functions vanishing at infinity}  
The aim of this section is to define a C*-algebra which can be viewed as the algebra of continuous functions 
on the 2-dimensional quantum complex plane
vanishing at infinity. The definition will be motivated by a similar construction for the 
1-dimensional quantum complex plane \cite{CW}. 
As a point of departure, we first  look for an auxiliary *-algebra, where the commutation relations are 
considerable simple. 

For the convenience of the reader, we recall the construction of the C*-algebra $C_0(\C_q)$ 
of continuous functions vanishing at infinity on the 1-dimensional quantum complex plane \cite{CW}. 
Given a representation of the type \eqref{L2rep}, consider the following *-subalgebra of $\rmB( \LL([0,\infty), \mu))$\hs:
\[  \label{Cq}
\alg\{C_0(\spec(|\zeta|),U\}   :=  \Big\{  \sum_{\text{finite}} f_k(|\zeta|) \hs U^k\,:\, 
k\hsp \in\hsp \Z,\  f_k\hsp\in\hsp C_0( \spec(|\zeta|),\ f_k(0)\hsp=\hsp 0 \ \text{if} \ k\hsp\neq\hsp 0\Big\}, 
\]
where $\mu(\{0\})=0$ and $U$ denotes the unitary operator from the polar decomposition $\zeta =U\hs |\zeta|$. 
For all bounded continuous functions $f$ on $\spec(|\zeta|)$, the operators $f(|\zeta|)$ and $U$ satisfy the commutation relation 
$$
U\hs f(|\zeta|) = f(q\hs |\zeta|) \hs U. 
$$
In\cite{CW}, 
a representation of the type   \eqref{L2rep}  of a $q$-normal operator $Z=U\hs |Z|$ was said to be  
\emph{universal} if $\spec(|Z|)=[0,\infty)$, or equivalently if $\supp(\mu)=[0,\infty)$. 
It has the universal property that 
\[
\alg\{C_0(\spec(|Z|),U\}   \ni \sum_{\text{finite}} f_k(|Z|) \hs U^k \ 
\longmapsto \ \sum_{\text{finite}} f_k(|\zeta|) \hs U^k \in \alg\{C_0(\spec(|\zeta|),U\}   
\] 
yields always a well-defined surjective *-homomorphism. 
Although the exact definition in \cite{CW} is slightly abstract, \cite[Theorem 3.3]{CW} states that 
$C_0(\C_q)$ is isomor\-phic to the norm closure of $\alg\{C_0(\spec(|Z|),U\}$ 
in $\rmB( \LL([0,\infty), \mu))$.

Motivated by the previous description, we call a representation from Theorem \ref{teo2} 
\emph{universal} if $\epsilon=1$ and 
$\supp(\mu)=\supp(\nu)=[0,\infty)$. Such $q$-invariant measures can be obtained, for instance, 
by taking the Lebesgue measure $\lambda$ on $(q,1]$ and setting 
$$ 
\mu(M) = \sum_{k\in\Z} \lambda ( q^{-k}(M\cap (q^{k+1},q^k])) .
$$

Given a universal representation, consider the polar decompositions 
$z_1\hsp=\hsp U\hs |z_1|$ and $z_2\hsp  =\hsp V\hs |z_2|$. 
For all  $h\in \dom(|z_1|)=\dom(z_1)$ and $g\in \dom(|z_2|)=\dom(z_2)$, \eqref{multrep} 
and \eqref{multrep*} imply 
\[ \label{mod}
|z_1|\hs h(t,s) = \big(\chi_{[q^{-1},\infty)}\hsp(s) \sqrt{s^{2}-1}\hs  t +  s \hs \chi_{\{0\}}\hsp(t)\big)   h(t, s),   \qquad 
|z_2|\hs g(t,s)= \hs t\hs g( t,s).         
\] 
Since 
\begin{align*}
&\ran(|z_1|)=\ker(|z_1|)^\perp= \ran\big(\chi_{(0,\infty)}\hsp(t)\,\chi_{[q^{-1},\infty)}\hsp(s) + \chi_{\{0\}}\hsp(t)\, \chi_{(0,\infty)}(s)\big), \\
&\ran(|z_2|)=\ker(|z_2|)^\perp= \ran(\chi_{(0,\infty)}\hsp(t)), 
\end{align*}
it follows from \eqref{multrep} that 
\begin{align} \label{U}
U h(s, t) &=\big( \chi_{(0,\infty)}\hsp(t)\,\chi_{[q^{-1},\infty)}\hsp(qs) + \chi_{\{0\}}\hsp(t)\, \chi_{(0,\infty)}(s)\big)h(qs, t) , \\
V h(s, t) &= \chi_{(0,\infty)}\hsp(t)  h(s, qt),    \label{V}
\end{align}
for all $h\in \hH$, 
where we used $\chi_{(0,\infty)}\hsp(q\hs r)=\chi_{(0,\infty)}\hsp(r)$. 
Their adjoints act on $\hH$ by 
\begin{align}  \label{U*}
U^* h(s, t) &=\big( \chi_{(0,\infty)}\hsp(t)\,\chi_{[q^{-1},\infty)}\hsp(s) + \chi_{\{0\}}\hsp(t)\, \chi_{(0,\infty)}(s)\big)h(q^{-1} s, t) , \\
V^* h(s, t) &= \chi_{(0,\infty)}\hsp(t)  h(s, q^{-1}t) .  \label{V*}
\end{align}
From \eqref{U}--\eqref{V*}, we get 
\begin{align}  \label{UU*}
U^* U  &=\chi_{(0,\infty)}\hsp(t)\,\chi_{[q^{-1},\infty)}\hsp(s) + \chi_{\{0\}}\hsp(t)\, \chi_{(0,\infty)}(s),\\
 U U ^*  &=\chi_{(0,\infty)}\hsp(t)\,\chi_{[q^{-1},\infty)}\hsp(qs) + \chi_{\{0\}}\hsp(t)\, \chi_{(0,\infty)}(s),  \label{U*U}  \\
V^* V &= \chi_{(0,\infty)}\hsp(t)  =V V^*.   \label{VV*}
\end{align}
Using again $\chi_{(0,\infty)}\hsp(q^{\pm 1}t)=\chi_{(0,\infty)}\hsp(t) $  and $\chi_{\{0\}}\hsp(q^{\pm 1} t)=\chi_{\{0\}}\hsp(t)$, 
one easily sees that 
\[
UV=VU, \quad UV^*=V^* U, \quad U^*V=VU^*,\quad U^*V^*=V^*U^*. 
\]
Considering Borel measurable functions $f$ on $[0,\infty)\! \times\! [0,\infty)$ as multiplication operators by 
$$
f \hs h(s,t):= f(s,t)\, h(s,t), 
$$
we obtain from \eqref{U}--\eqref{V*} the following simple commutation relations: 
\begin{align}   \label{fUU}
U\hs f(s,t)&= f(qs, t)\hs U, &  U^*\hs f(s,t) &= f(q^{-1}s, t)\hs U^*, \\
V\hs f(s,t)&= f(s, qt)\hs V,  & V^*\hs f(s,t)  &=f(s, q^{-1} t) \hs  V . \label{fVV}
\end{align}
In fact, the reason for choosing $|\zeta|$ from \eqref{zeta} and $y$ from \eqref{y} as multiplication operators was 
to obtain such simple commutation relations between functions and the phases from the polar decom\-positions of 
the generators of $\CCq$.  As a consequence, 
\[ \label{fun}
\mathrm{Fun}(\C_q^2) := \Big\{ \sum_{\text{finite}} f_{nm}(s,t)\hs U^{\#n}V^{\#m}\,:\, f\in \mathcal{L}_{\infty}([0,\infty)\times [0,\infty))\Big\} 
\]
is a *-subalgebra of $\rmB(\hH)$, where 
\[\label{hash}
U^{\# n} := \left\{  
\begin{array}{l l}
    U^n,  & \quad n\geq 0\,,\\
    U^{*n}, & \quad n< 0\,, 
\end{array}\right.
\qquad 
V^{\# n} := \left\{  
\begin{array}{l l}
    V^n,  & \quad n\geq 0\,,\\
    V^{*n}, & \quad n< 0\,, 
\end{array}\right.
\qquad 
n\in\Z. 
\]
Moreover, by \eqref{mod} and the previous commutation relations, there exists for all $k,l,m,n\in\N_{0}$ 
a Borel measurable function 
$p_{klmn}$ on $[0,\infty)\times [0,\infty)$ 
such that 
\[  \label{p}
z_1^k z_1^{*l} z_2^m z_2^{*n} =  p_{klmn}(s,t)\hs U^{\# k-l}V^{\# m-n} \,. 
\]

Equations \eqref{fun} and \eqref{p} are the motivation for the construction of the C*-algebra of 
continuous functions on $\C^2_q$ vanishing at infinity. 
Before treating the quantum case, let us briefly review the classical C*-algebra $C_0(\C^2)$. 
In analogy to the polar decomposition of the generators, write 
$z_1= \E^{\im \varphi} |z_1|$ and $z_2= \E^{\im \theta} |z_2|$. Let $n,m\in\Z$. 
Given a function $f_{nm}\in C_0([0,\infty)\! \times\! [0,\infty))$, the assignment 
$$
\C^2 \ni (\E^{\im \varphi} |z_1| ,\E^{\im \theta} |z_2| )\  
 \longmapsto\  f_{nm}(|z_1|,  |z_2|)\hs \E^{\im \varphi n}\hs \E^{\im \theta m} \in\C 
$$
defines a function in $C_0(\C^2)$ if and only if  
\begin{enumerate}[(a)]
\item
$f_{nm}(0,  |z_2|)\hs \E^{\im \varphi n}\hs \E^{\im \theta m}$ 
does not depend on $\varphi $ 
\ \,$\Longrightarrow$ \ $f_{nm}(0,  |z_2|)=0$ for $n\neq 0$, 
\item
$f_{nm}(|z_1|,0)\hs \E^{\im \varphi n}\hs \E^{\im \theta m}$ 
does not depend on $\theta $ 
\ \;$\Longrightarrow$ \ \,$f_{nm}(|z_1|, 0)=0$ for $m\neq 0$. 
\end{enumerate}
Moreover, the following  *-subalgebra of $C_0(\C^2)$,  
\begin{align}  \label{C0C2} 
\mathcal{C}_0(\C^2):= \Big\{  \sum_{\text{finite}} f_{nm}(|z_1|,|z_2|) \hs \E^{\im \varphi n}\hs \E^{\im \theta m}&\,:\, \ 
 f_{nm} \in C_0([0,\infty)\! \times\! [0,\infty)), \ \,n,m\in\Z, \\[-8pt]
 &f_{nm}(0,|z_2|)=0 \ \;\text{if} \ \;n\neq 0, \ \  f_{nm}(|z_1|,0)=0 \ \;\text{if} \ \;m\neq 0 \, \Big\}, \nonumber 
\end{align}
separates the points of $\C^2$. By the Stone--Weierstra{\ss} theorem, its norm closure yields $C_0(\C^2)$. 

To pass from the classical to the quantum case, we start with a universal representation from Theorem \ref{teo2}. 
For all bounded continuous functions $g$ on $[0,\infty)$, the operators $g(|z_1|), \, g(|z_2|) \in\rmB(\hH)$  are well-defined 
by the spectral theorem,  and  \eqref{mod} shows that 
$$
g(|z_1|)\, h(s,t)  = g\big(\hs \chi_{[q^{-1},\infty)}\hsp(s)\sqrt{s^{2}-1}\hs  t +  s \hs \chi_{\{0\}}\hsp(t)\hs \big)\,h(s,t),\qquad 
g(|z_2|)\hs h(s,t)  = g(t) \hs h(s,t)  . 
$$
By the universality of the representation, we have $\|g(|z_i|)\| = \|g\|_\infty$, $i=1,2$. 
In particular, the norm does not depend on the chosen measures of a universal representation. 
Similarly, for $f\in C_0([0,\infty)\! \times\! [0,\infty))$, the formula  
\[     \label{fzz}
f(|z_1|,|z_2|)\, h(s,t) := f\big(\hs \chi_{[q^{-1},\infty)}\hsp(s)\sqrt{s^{2}-1}\hs  t +  s \hs \chi_{\{0\}}\hsp(t) \hs,t\hs\big)\hs h(s,t),\qquad h\in\hH, 
\] 
yields a well-defined operator in $\rmB(\hH)$ with  $\|f(|z_1|,|z_2|)\|  =  \|f\|_\infty$. 
These observations lead to the following definition of $C_0(\C_q^2)$. 
\begin{defn}
Given a universal representation of $\CCq$ from Theorem \ref{teo2},  
let $z_1 = U\hs |z_1|$ and $z_2 = V\hs |z_2|$ be the polar decompositions of 
$z_1$ and $z_2$, respectively. 
The   C*-algebra $C_0(\C_q^2)$  of continuous functions 
on the 2-dimensional quantum complex plane
vanishing at infinity is defined as the norm closure of 
\begin{align} \label{defC0} 
\mathcal{C}_0(\C^2_q):=\text{*-}\mathrm{alg} \Big\{  &\sum_{\text{finite}}
 f_{nm}\big(\chi_{[q^{-1},\infty)}\hsp(s) \sqrt{s^{2}-1}\hs  t +  s \hs \chi_{\{0\}}\hsp(t)\, ,\hs t\hs \big) \, U^{\# n}\hs V^{\# m}\,:\, \ n,m\in\Z,    \\[-4pt]
& \   f_{nm} \in C_0([0,\infty)\! \times\! [0,\infty)),   \  \;  f_{nm}(0,t)=0 \ \,\text{if} \ \,n\neq 0, \ \;  f_{nm}(s,0)=0 \ \,\text{if} \ \,m\neq 0  \Big\} 
\nonumber
\end{align}
in $\rmB(\hH)$. 
\end{defn} 
Apart from the non-commutativity in \eqref{fUU} and \eqref{fVV}, the main difference to the classical case 
is the unusual expression in the first argument of the function $f_{nm}$. However, 
if we look at  the representation on the orthogonal components of \eqref{ortoH} separately, 
our formulas have a natural geometric interpretation. 
First note that the function $h_{00}(s,t):=\chi_{\{0\}}\hsp(s)  \chi_{\{0\}}\hsp(t) \in\hH$ generates the 1-dimensional invariant subspace 
$\LL( [0,\infty)\! \times\! [0,\infty)\hs , \hs \epsilon\hs \delta_0\ot \delta_0 )$, and the representation of $\mathcal{C}_0(\C^2_q)$  
on it reads 
 \[  \label{ev0}
  \sum_{\text{finite}} 
  f_{nm}(\chi_{[q^{-1},\infty)}\hsp(s) \sqrt{s^{2}-1}\hs  t +  s \hs \chi_{\{0\}}\hsp(t)\, ,\hs t) \, U^{\# n}\hs V^{\# m} \,   h_{00}(s,t) = f_{00}(0,0) \hs h_{00}(s,t) 
 \]
since $f_{nm}(0,0)= 0$ if $n\neq 0$ or $m\neq 0$.  
Obviously, \eqref{ev0} corresponds to evaluating functions on 2-dimensional complex plane at $(0,0)$. 
This 1-dimensional representation describes the only classical point $(0,0)$ of $\C_q^2$. 

Next, on $\LL( [0,\infty)\! \times\! [0,\infty)\hs , \hs\nu  \hsp \ot\hsp \delta_0)$, we have $z_2=0$ and can 
thus write 
\[ \label{C0C}
\sum_{n,m} f_{nm}(\chi_{[q^{-1},\infty)}\hsp(s) \sqrt{s^{2}-1}\hs  t +  s \hs \chi_{\{0\}}\hsp(t)\, ,\hs t) \, U^{\# n}\hs V^{\# m} 
= \sum_{n} f_{n0}(s ,0) \, U^{\# n}. 
\]
Recalling that $z_1$ acts on $\LL( [0,\infty)\! \times\! [0,\infty)\hs , \hs\nu  \hsp \ot\hsp \delta_0)$ as a $q$-normal operator  
and comparing \eqref{C0C} with \eqref{Cq}  shows that the restriction of 
$\mathcal{C}_0(\C^2_q)$ to $\LL( [0,\infty)\! \times\! [0,\infty)\hs , \hs\nu  \hsp \ot\hsp \delta_0)$ generates $C_0(\C_q)$. 
This representation corresponds to an inclusion $\C_q\times\{0\} \subset \C^2_q$. 

Finally, on $\LL\big( [0,\infty)\! \times\! [0,\infty)\hs , \hs \sigma \hsp \ot\hsp\mu\big)$, 
we have $t= |z_2|>0$ and $\chi_{[q^{-1},\infty)}\hsp(s) \sqrt{s^{2}-1} = |\omega |$, see \eqref{wy}. 
Thus the representation of the functions from \eqref{defC0} can be written 
$$
 \sum_{\text{finite}}f_{nm}(|\omega|\,  t \, ,\hs t) \, U^{\# n}\hs V^{\# m}. 
$$
Classically we get, for all $|\omega|\geq 0$, 
$$
\sum_{\text{finite}}f_{nm}(|\omega|\,  t \, ,\hs t) \hs \E^{\im \varphi n}\hs \E^{\im \theta m} \Big|_{t=0}
=  \sum_{\text{finite}}f_{nm}(0,0) \hs \E^{\im \varphi n}\hs \E^{\im \theta m} = f_{00}(0,0). 
$$
Therefore these functions separate only the points of $\C^2\setminus     \C \times\{0\} $ and the whole 
subspace  $\C \times\{0\} $ gets identified with the single point $(0,0)$. 
Geometrically, this corresponds to a 2-dimensional complex plane, where $\C \times\{0\} $ is shrunk 
to one point. 

Arguing backwards, we can say that the representation from \eqref{zL2} corresponds to a
2-dimen\-sional quantum complex plane, where  $\C_q \times\{0\} $ is shrunk to a point, and that 
$\C_q \times\{0\} $ gets glued into this space by the representation \eqref{zL20}. 
Moreover, the origin of the 2-dimensional quantum complex plane is the only classical point described 
by the 1-dimensional representation \eqref{ev0}.

\end{document}